\def\BibTeX{{\rm B\kern-.05em{\sc i\kern-.025em b}\kern-.08em
    T\kern-.1667em\lower.7ex\hbox{E}\kern-.125emX}}
\newcommand{\EE}{\mathbb{E}}
\newcommand{\E}{\mathrm{E}}
\newcommand{\PP}{\mathrm{P}}
    \newcommand{\dto}{\xrightarrow{d}}
    \newcommand{\wto}{\xrightarrow{w}}
    \newcommand{\vto}{\xrightarrow{v}}
    \newcommand{\fidi}{\xrightarrow{\text{fidi}}}
\newcommand{\eqd}{\stackrel{d}{=}}
 \newcommand{\floor}[1]{\lfloor#1\rfloor}
 \newcommand{\rmd}{\mathrm{d}}
\theoremstyle{plain}
\newtheorem{thm}{Theorem}[section]
\newtheorem{lem}[thm]{Lemma}
\theoremstyle{definition}
\newtheorem{rem}[thm]{Remark}
\newtheorem{ex}[thm]{Example}
\newtheorem{cond}[thm]{Condition}
\numberwithin{equation}{section}
\begin{document}

\title[On joint weak convergence of partial sum and maxima processes] 
{On joint weak convergence of partial sum and maxima processes}

%
\author{Danijel Krizmani\'{c}}

\address{Danijel Krizmani\'{c}\\ Department of Mathematics\\
        University of Rijeka\\
        Radmile Matej\v{c}i\'{c} 2, 51000 Rijeka\\
        Croatia}
\email{dkrizmanic@math.uniri.hr}


\subjclass[2010]{Primary 60F17; Secondary 60G52, G0G55, 60G70}
\keywords{functional limit theorem, regular variation, weak $M_{1}$ topology, extremal process, L\'{e}vy process}


\begin{abstract}
For a strictly stationary sequence of random variables we derive functional convergence of the joint partial sum and partial maxima process under joint regular variation with index $\alpha \in (0,2)$ and weak dependence conditions. The limiting process consists of an $\alpha$--stable L\'{e}vy process and an extremal process. We also describe the dependence between these two components of the limit. The convergence takes place in the space of $\mathbb{R}^{2}$--valued c\`{a}dl\`{a}g functions on
$[0,1]$, with the Skorohod weak $M_{1}$ topology. We further show that this topology in general can not be replaced by the stronger (standard) $M_{1}$ topology.
\end{abstract}

\maketitle

\section{Introduction}

Consider a strictly stationary sequence of random variables $(X_{n})$ and denote by $S_{n}= X_{1} + \ldots + X_{n}$ and $M_{n}= \max\{X_{i}
 : i=1, \ldots, n \}$, $n \geq 1$, its accompanying sequences of partial sums and maxima, respectively. It is well known that if the $X_{n}$ are i.i.d.~and regularly varying with index $\alpha \in (0,2)$, then
 $$ \frac{S_{n}-b_{n}'}{a_{n}'} \dto S,$$
 for some $a_{n}'>0$ and $b_{n}' \in \mathbb{R}$ and some $\alpha$--stable random variable $S$, and
 $$\frac{M_{n}}{a_{n}''} \dto Y,$$
 for some $a_{n}''>0$ and some random variable $Y$ with an extreme-value distribution, see for example Gnedenko and Kolmogorov~\cite{GnKo54} and Resnick~\cite{Re87}. The weak convergence of partial maxima holds also for $\alpha \geq 2$.
 The joint weak limiting behavior of $(S_{n}, M_{n})$ with appropriate centering and scaling was investigated by Chow and Teugels~\cite{ChTe78}. They also obtained a functional limit theorem for a suitably normalized joint partial sum and partial maxima process. See also Anderson and Turkman~\cite{AnTu91} and Resnick~\cite{Re86} for related results.

In this paper, under the properties of weak dependence and joint regular variation with index $\alpha \in (0,2)$ for the sequence $(X_{n})$, we investigate functional convergence of the joint partial sum and partial maxima process $L_{n} = (V_{n}, W_{n})$ in the space $D([0,1], \mathbb{R}^{2})$, where
$$V_{n}(t) =  \frac{S_{\floor{nt}}}{a_{n}} - \floor{nt}b_{n},  \quad    W_{n}(t) = \frac{ M_{\lfloor nt \rfloor}}{a_{n}},  \qquad t \in [0,1],$$
with $(a_{n})$ being a sequence of positive real numbers such that
\begin{equation}\label{e:niz}
 n \PP ( |X_{1}| > a_{n}) \to 1,
\end{equation}
as $n \to \infty$ and
\begin{equation}\label{e:centb}
 b_{n} = \left\{ \begin{array}{cc}
                                   0, & \quad \alpha \in (0,1),\\[0.5em]
                                   \mathrm{E} \Big( \frac{X_{1}}{a_{n}} 1_{ \big\{\frac{|X_{1}|}{a_{n}} \leq 1 \big\} } \Big), & \quad \alpha \in [1,2).
                                 \end{array}\right.
\end{equation}
Here, $\floor{x}$ represents the greatest integer not larger than $x$ and $D([0, 1], \mathbb{R}^{2})$ is the space of $\mathbb{R}^{2}$--valued c\`adl\`ag functions on $[0, 1]$. The functional convergence of $L_{n}$ in the special case when $(X_{n})$ is a linear process from a regularly varying distribution with index $\alpha \in (0,2)$ was studied recently in Krizmani\'{c}~\cite{Kr18-0},~\cite{Kr18}.

The main result of our article shows that for a strictly stationary, regularly varying sequence of dependent random variables $(X_{n})$ with index $\alpha \in (0,2)$, for which clusters of high-treshold excesses can be broken down into asymptotically independent blocks, the stochastic process $L_{n}$ converge in the space $D([0,1], \mathbb{R}^{2})$ endowed with the Skorohod weak $M_{1}$ topology under the condition that all extremes within each cluster of big values have the same sign. This topology is weaker than the more commonly used Skorohod $J_{1}$ topology, the latter being appropriate when there is no clustering of extremes (which for example occurs in the i.i.d.~case).

The paper is organized as follows. In Section~\ref{s:one} we introduce the essential ingredients about regular variation, weak dependence and Skorohod topologies. In Section~\ref{s:three} we state and prove our main result using a new limit theorem derived recently by Basrak and Tafro~\cite{BaTa16} for the time-space point processes $N_{n} = \sum_{i=1}^{n}\delta_{(i/n, X_{i}/a_{n})}$.
Finally, in Section~\ref{s:four} we illustrate by an example that
the weak $M_{1}$ convergence in our main theorem, in general, can not be replaced by the standard $M_{1}$ convergence.

\section{Preliminaries}\label{s:one}

\subsection{Regular variation}

Let $\EE^{d}=[-\infty, \infty]^{d} \setminus \{ 0 \}$. We equip
$\EE^{d}$ with the topology in which a set $B \subset \EE^{d}$
has compact closure if and only if it is bounded away from zero,
that is, if there exists $u > 0$ such that $B \subset \EE^{d}_u = \{ x
\in \EE^{d} : \|x\| >u \}$. Here $\| \cdot \|$ denotes the max-norm on $\mathbb{R}^{d}$, i.e.\
$\displaystyle \| x \|=\max \{ |x_{i}| : i=1, \ldots , d\}$ where
$x=(x_{1}, \ldots, x_{d}) \in \mathbb{R}^{d}$. Denote by $C_{K}^{+}(\EE^{d})$ the class of all
nonnegative, continuous functions on $\EE^{d}$ with compact support.

We say that a strictly stationary process $(X_{n})_{n \in \mathbb{Z}}$ is \emph{(jointly) regularly varying} with index
$\alpha \in (0,\infty)$ if for any nonnegative integer $k$ the
$kd$-dimensional random vector $X = (X_{1}, \ldots , X_{k})$ is
multivariate regularly varying with index $\alpha$, i.e.\ there
exists a random vector $\Theta$ on the unit sphere
$\mathbb{S}^{kd-1} = \{ x \in \mathbb{R}^{kd} : \|x\|=1 \}$ such
that for every $u \in (0,\infty)$ and as $x \to \infty$,
 \begin{equation}\label{e:regvar1}
   \frac{\PP(\|X\| > ux,\,X / \| X \| \in \cdot \, )}{\PP(\| X \| >x)}
    \wto u^{-\alpha} \PP( \Theta \in \cdot \,),
 \end{equation}
the arrow ''$\wto$'' denoting weak convergence of finite measures.
Regular variation can be expressed in terms of vague convergence of
measures on $\EE$ as follows: for $a_n$ as in (\ref{e:niz}),
\begin{equation}
  \label{e:onedimregvar}
  n \PP( a_n^{-1} X_i \in \cdot \, ) \vto \mu( \, \cdot \,),
\end{equation}
where the limit $\mu$ is a nonzero Radon measure on $\EE$ given by
\begin{equation}\label{e:mu}
  \mu(\rmd x) = \bigl( p \, 1_{(0, \infty)}(x) + q \, 1_{(-\infty, 0)}(x) \bigr) \, \alpha |x|^{-\alpha-1}\,\rmd x,
\end{equation}
for some $p \in [0,1]$, with $q=1-p$.

Theorem 2.1 in Basrak and Segers~\cite{BaSe} provides a convenient
characterization of joint regular variation:~it is necessary and
sufficient that there exists a process $(Y_n)_{n \in \mathbb{Z}}$
with $\PP(|Y_0| > y) = y^{-\alpha}$ for $y \geq 1$ such that, as $x
\to \infty$,
\begin{equation}\label{e:tailprocess}
  \bigl( (x^{-1}\ X_n)_{n \in \mathbb{Z}} \, \big| \, | X_0| > x \bigr)
  \fidi (Y_n)_{n \in \mathbb{Z}},
\end{equation}
where "$\fidi$" denotes convergence of finite-dimensional
distributions. The process $(Y_{n})$ is called
the \emph{tail process} of $(X_{n})$.

\subsection{Point processes and dependence conditions}\label{s:pp}

Let $(X_{n})$ be a strictly stationary sequence of random variables and assume it is jointly regularly varying with index $\alpha >0$. Let $(Y_{n})$ be
the tail process of $(X_{n})$. In order to obtain weak convergence of the process $L_{n}$ we will use the so-called complete convergence result for the corresponding point process of jumps obtained recently by Basrak and Tafro~\cite{BaTa16}, and then by the continuous mapping theorem and some properties of Skorohod topologies we will transfer this convergence result to the joint partial sum and maxima process.

Let
\begin{equation*}\label{E:ppspacetime}
 N_{n} = \sum_{i=1}^{n} \delta_{(i / n,\,X_{i} / a_{n})} \qquad \textrm{for all} \ n\in \mathbb{N},
\end{equation*}
with $a_{n}$ as in (\ref{e:niz}). The point process convergence for the sequence $(N_{n})$ was already established by Basrak et al.~\cite{BKS} on the space $[0,1] \times \EE_{u}$ for any threshold $u>0$, with the limit depending on that threshold. Recently Basrak and Tafro~\cite{BaTa16} obtained a new convergence result for $N_{n}$ without the restriction to various domains (i.e. their convergence result holds on the space $[0,1] \times \EE$).

 The appropriate weak dependence conditions for this convergence result are given below. With them we will be able to control the dependence in the sequence $(X_{n})$.

\begin{cond}\label{c:mixcond1}
There exists a sequence of positive integers $(r_{n})$ such that $r_{n} \to \infty $ and $r_{n} / n \to 0$ as $n \to \infty$ and such that for every $f \in C_{K}^{+}([0,1] \times \mathbb{E})$, denoting $k_{n} = \lfloor n / r_{n} \rfloor$, as $n \to \infty$,
\begin{equation}\label{e:mixcon}
 \E \biggl[ \exp \biggl\{ - \sum_{i=1}^{n} f \biggl(\frac{i}{n}, \frac{X_{i}}{a_{n}}
 \biggr) \biggr\} \biggr]
 - \prod_{k=1}^{k_{n}} \E \biggl[ \exp \biggl\{ - \sum_{i=1}^{r_{n}} f \biggl(\frac{kr_{n}}{n}, \frac{X_{i}}{a_{n}} \biggr) \biggr\} \biggr] \to 0.
\end{equation}
\end{cond}

\begin{cond}\label{c:mixcond2}
There exists a sequence of positive integers $(r_{n})$ such that $r_{n} \to \infty $ and $r_{n} / n \to 0$ as $n \to \infty$ and such that for every $u > 0$,
\begin{equation}
\label{e:anticluster}
  \lim_{m \to \infty} \limsup_{n \to \infty}
  \PP \biggl( \max_{m \leq |i| \leq r_{n}} | X_{i} | > ua_{n}\,\bigg|\,| X_{0}|>ua_{n} \biggr) = 0.
\end{equation}
\end{cond}
Condition~\ref{c:mixcond1} is implied by the strong mixing property (see Krizmani\'{c}~\cite{Kr16}).
For a discussion of Conditions~\ref{c:mixcond1} and~\ref{c:mixcond2} we refer to Bartkiewicz et al.~\cite{BaJaMiWi11} and Basrak et al.~\cite{BKS}.
There are many time series satisfying these conditions, including moving averages, stochastic volatility and GARCH models (see for example Basrak et al.~\cite{BKS}, section 4).
By Proposition~4.2 in Basrak and Segers~\cite{BaSe},
under Condition~\ref{c:mixcond2} the following
holds
\begin{eqnarray}\label{E:theta:spectral}
   \theta := \PP ({\textstyle\sup_{i\ge 1}} \| Y_{i}\| \le 1) = \PP ({\textstyle\sup_{i\le -1}} \| Y_{i}\| \le 1)>0,
\end{eqnarray}
and $\theta$ is the extremal index of the univariate sequence $(| X_{n} |)$.
For a detailed discussion on joint regular variation and dependence Conditions~\ref{c:mixcond1} and \ref{c:mixcond2} we refer to Basrak et al.~\cite{BKS}, Section 3.4.

Under joint regular variation and Conditions~\ref{c:mixcond1} and \ref{c:mixcond2}, by Theorem 3.1 in Basrak and Tafro~\cite{BaTa16}, as $n \to \infty$,
\begin{equation}\label{e:BaTa}
N_{n} \dto N = \sum_{i}\sum_{j}\delta_{(T_{i}, P_{i}\eta_{ij})}
\end{equation}
in $[0,1] \times \EE$, where $\sum_{i=1}^{\infty}\delta_{(T_{i}, P_{i})}$ is a Poisson process on $[0,1] \times (0,\infty)$
with intensity measure $Leb \times \nu$ where $\nu(\rmd x) = \theta \alpha
x^{-\alpha-1}1_{(0,\infty)}(x)\,\rmd x$, and $(\sum_{j= 1}^{\infty}\delta_{\eta_{ij}})_{i}$ is an i.i.d.~sequence of point processes in $\EE$ independent of $\sum_{i}\delta_{(T_{i}, P_{i})}$ and with common distribution equal to the distribution of $\sum_{j}\delta_{Z_{j}/L_{Z}}$, where $L_{Z}= \sup_{j \in \mathbb{Z}}|Z_{j}|$ and $\sum_{j}\delta_{Z_{j}}$ is distributed as $( \sum_{j \in \mathbb{Z}} \delta_{Y_j} \,|\, \sup_{i \le -1} | Y_i| \le 1).$

Denote by $\sum_{j}\delta_{\eta_{j}}$ a point process with the distribution equal to the distribution of $\sum_{j}\delta_{\eta_{1j}}$.
For $ \alpha \leq 1$ it holds that
\begin{equation}\label{e:MW1}
 \mathrm{E} \Big( \sum_{j}|\eta_{j}| \Big)^{\alpha} < \infty
\end{equation}
(see Davis and Hsing~\cite{DaHs95}), but it may fail for $\alpha > 1$ (see Mikosch and Wintenberger~\cite{MiWi14}). In the latter case we will have to assume (\ref{e:MW1}).

\subsection{The weak and strong $M_{1}$ topologies}\label{ss:j1m1}

The stochastic processes that we consider have discontinuities, and hence for the function space of their sample paths we take the space $D([0,1], \mathbb{R}^{2})$ of all right-continuous $\mathbb{R}^{2}$--valued functions on $[0,1]$ with left limits.

 The stochastic processes $V_{n}$ and $W_{n}$ converge (separately) in the space $D([0,1], \mathbb{R})$ equipped with the standard $M_{1}$ topology, see Basrak et al.~\cite{BKS} and Krizmani\'{c}~\cite{Kr14}. In this paper we use the weak $M_{1}$ topology, since as we show later the functional convergence for $L_{n}$ in general fails to hold in the standard $M_{1}$ topology on $D([0,1], \mathbb{R}^{2})$. In the sequel we give the definitions of the weak and standard (strong) $M_{1}$ topologies.

 For $x \in D([0,1],
\mathbb{R}^{2})$ the completed graph of $x$ is the set
\[
  G_{x}
  = \{ (t,z) \in [0,1] \times \mathbb{R}^{2} : z \in [[x(t-), x(t)]]\},
\]
where $x(t-)$ is the left limit of $x$ at $t$ and $[[a,b]]$ is the product segment, i.e.
$[[a,b]]=[a_{1},b_{1}] \times [a_{2},b_{2}]$
for $a=(a_{1}, a_{2}), b=(b_{1}, b_{2}) \in
\mathbb{R}^{2}$. We define an
order on the graph $G_{x}$ by saying that $(t_{1},z_{1}) \le
(t_{2},z_{2})$ if either (i) $t_{1} < t_{2}$ or (ii) $t_{1} = t_{2}$
and $|x_{j}(t_{1}-) - z_{1j}| \le |x_{j}(t_{2}-) - z_{2j}|$
for all $j=1, 2$. Note that the relation $\le$ induces only a partial
order on the graph $G_{x}$. A weak parametric representation
of the graph $G_{x}$ is a continuous nondecreasing function $(r,u)$
mapping $[0,1]$ into $G_{x}$, with $r \in C([0,1],[0,1])$ being the
time component and $u \in C([0,1],
\mathbb{R}^{2})$ being the spatial component, such that $r(0)=0,
r(1)=1$ and $u(1)=x(1)$. Let $\Pi_{w}(x)$ denote the set of weak
parametric representations of the graph $G_{x}$. For $x_{1},x_{2}
\in D([0,1], \mathbb{R}^{2})$ define
\[
  d_{w}(x_{1},x_{2})
  = \inf \{ \|r_{1}-r_{2}\|_{[0,1]} \vee \|u_{1}-u_{2}\|_{[0,1]} : (r_{i},u_{i}) \in \Pi_{w}(x_{i}), i=1,2 \},
\]
where $\|x\|_{[0,1]} = \sup \{ \|x(t)\| : t \in [0,1] \}$. Now we
say that $x_{n} \to x$ in $D([0,1], \mathbb{R}^{2})$ for a sequence
$(x_{n})$ in the weak Skorohod $M_{1}$ (or shortly $WM_{1}$)
topology if $d_{w}(x_{n},x)\to 0$ as $n \to \infty$.

Now we recall the definition of the standard $M_{1}$ topology. For $x \in D([0,1], \mathbb{R}^{2})$
let
\[
  \Gamma_{x}
  = \{ (t,z) \in [0,1] \times \mathbb{R}^{2} : z \in [x(t-), x(t)] \},
\]
where $[a,b] = \{  \lambda a + (1-\lambda)b : 0 \leq \lambda \leq 1 \}$ for $a, b \in \mathbb{R}^{2}$. We say $(r,u)$ is a parametric representation of $\Gamma_{x}$ if it is a continuous nondecreasing function mapping $[0,1]$ onto $\Gamma_{x}$. Denote by $\Pi(x)$ the set of all parametric representations of the graph $\Gamma_{x}$. Then for $x_{1},x_{2} \in D([0,1], \mathbb{R}^{2})$ put
\[
  d_{M_{1}}(x_{1},x_{2})
  = \inf \{ \|r_{1}-r_{2}\|_{[0,1]} \vee \|u_{1}-u_{2}\|_{[0,1]} : (r_{i},u_{i}) \in \Pi(x_{i}), i=1,2 \}.
\]
$d_{M_{1}}$ is a metric on $D([0,1], \mathbb{R}^{2})$, and the induced topology is called the (standard or strong) Skorohod $M_{1}$ topology.
The $WM_{1}$
topology is weaker than the standard $M_{1}$ topology on $D([0,1],
\mathbb{R}^{2})$. The $WM_{1}$ topology
coincides with the topology induced by the metric
\begin{equation}\label{e:defdp}
 d_{p}(x_{1},x_{2})= \max \{ d_{M_{1}}(x_{1j},x_{2j}) : j=1,2 \}
\end{equation}
 for $x_{i}=(x_{i1}, x_{i2}) \in D([0,1],
 \mathbb{R}^{2})$ and $i=1,2$. The metric $d_{p}$ induces the product topology on $D([0,1], \mathbb{R}^{2})$.
For detailed discussion of the strong and weak $M_{1}$ topologies we refer to
Whitt~\cite{Whitt02}, sections 12.3--12.5.

\section{Functional convergence of $L_{n}$}\label{s:three}

In this section we show the convergence of the joint partial sum and maxima
process
\begin{equation*}
  L_{n}(t) = (V_{n}(t), W_{n}(t)), \quad t \in [0,1],
\end{equation*}
 in the space $D([0,1], \mathbb{R}^{2})$
equipped with Skorohod weak $M_1$ topology. We identify the limit as $(V, W)$, where $V$ is a stable L\'{e}vy process and $W$ an extremal process. First we represent $L_n$ as the image of the
time-space point process $N_n$ under an appropriate sum-maximum
functional. Then, using certain continuity properties of this
functional, by the continuous mapping theorem we
transfer the weak convergence of $N_n$ in (\ref{e:BaTa}) to
weak convergence of $L_n$.

\subsection{The sum-maximum functional}

Fix $0 < u < \infty$ and define the sum-maximum functional
$$ \Phi^{(u)} \colon \mathbf{M}_{p}([0,1] \times \EE) \to D([0,1], \mathbb{R}^{2})$$
by
$$ \Phi^{(u)} \Big( \sum_{i}\delta_{(t_{i}, x_{i})} \Big) (t)
  =  \Big( \sum_{t_{i} \leq t}x_{i}\,1_{\{u < |x_{i}| < \infty \}},  \bigvee_{t_{i} \leq t} x_{i} \vee 0 \Big), \qquad t \in [0,1].$$
The space $\mathbf{M}_p([0,1] \times \EE)$ of Radon point
measures on $[0,1] \times \EE$ is equipped with the vague
topology and $D([0,1], \mathbb{R}^{2})$ is equipped with the weak $M_1$ topology. For convenience we set $\sup \emptyset = 0$. Let $\Lambda = \Lambda_{1} \cap \Lambda_{2}$, where
\begin{multline*}
 \Lambda_{1} =
 \{ \eta \in \mathbf{M}_{p}([0,1] \times \EE) :
   \eta ( \{0,1 \} \times \EE) = 0 = \eta ([0,1] \times \{ \pm \infty, \pm u \}) \}, \\[1em]
 \shoveleft \Lambda_{2} =
 \{ \eta \in \mathbf{M}_{p}([0,1] \times \EE) :
  \eta ( \{ t \} \times (u, \infty]) \cdot \eta ( \{ t \} \times [-\infty,-u)) = 0 \\
  \text{for all $t \in [0,1]$} \}.
\end{multline*}
 Observe that the elements
of $\Lambda_2$ have the property that atoms in $[0,1] \times \mathbb{E}_{u}$ with the same time
coordinate are all on the same side of the time axis.
Similar to Lemma 3.1 in Basrak et al.~\cite{BKS} one can prove the following result.

\begin{lem}
\label{l:prob1}
Assume that with probability one the tail process $(Y_{i})_{i \in \mathbb{Z}}$ in (\ref{e:tailprocess}) has no two values of the opposite sign. Then
$ \PP ( N \in \Lambda ) = 1$.
\end{lem}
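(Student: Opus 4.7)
My plan is to show $\PP(N\in\Lambda_1)=1$ and $\PP(N\in\Lambda_2)=1$ separately, using the explicit description of $N$ as a double sum driven by the Poisson process $\sum_i\delta_{(T_i,P_i)}$ with intensity $Leb\times\nu$ on $[0,1]\times(0,\infty)$ and independent i.i.d.\ cluster marks $\eta_i=\sum_j\delta_{\eta_{ij}}$ distributed as $\sum_j\delta_{Q_j}$. The key structural fact is that $|Q_j|=|Z_j|/L_Z\le 1$, so every atom $P_i\eta_{ij}$ of $N$ lies in $\mathbb{R}$, and all atoms of $N$ with time coordinate $T_i$ share the sign of $\eta_{ij}$ because $P_i>0$.

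For $\Lambda_1$ I verify its three constituent conditions in turn. First, $\PP(N(\{0,1\}\times\EE)>0)=0$ because $Leb\{0,1\}=0$ and the projection of the ground Poisson process onto the time axis has Lebesgue intensity. Second, $N([0,1]\times\{\pm\infty\})=0$ a.s.\ since $|P_i\eta_{ij}|\le P_i<\infty$ from $|\eta_{ij}|\le 1$. Third, for the horizontal lines $\{\pm u\}$, I apply Campbell's formula to the marked Poisson process, using the independence of the Poisson atoms and the cluster marks, to obtain
$$\E\bigl[N([0,1]\times\{u\})\bigr]=\int_0^1 dt\int_0^\infty\theta\alpha\, p^{-\alpha-1}\,\E\Bigl[\sum_j 1_{\{p\eta_{1j}=u\}}\Bigr]\,dp.$$
Exchanging the outer integral with the sum by Fubini, for each $j$ the integral $\int_0^\infty 1_{\{p\eta_{1j}=u\}}\,p^{-\alpha-1}\,dp$ vanishes, since the set $\{p>0:p\eta_{1j}=u\}$ is either empty or a singleton and thus has Lebesgue measure zero. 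Hence the expectation is zero and $N([0,1]\times\{u\})=0$ a.s.; an identical argument handles $-u$.

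For $\Lambda_2$ I use the hypothesis on the tail process. By assumption, with probability one $(Y_k)_{k\in\mathbb{Z}}$ contains no two values of opposite sign; since $\sum_j\delta_{Z_j}$ is distributed as $\bigl(\sum_j\delta_{Y_j}\,\big|\,\sup_{k\le-1}|Y_k|\le 1\bigr)$, this property is inherited by $(Z_j)$ and hence by $(Q_j)=(Z_j/L_Z)$. Consequently, for each cluster index $i$ all atoms $P_i\eta_{ij}$ have the same sign, so at every $t\in[0,1]$ the product $N(\{t\}\times(u,\infty])\cdot N(\{t\}\times[-\infty,-u))$ vanishes. Combining the two parts yields $\PP(N\in\Lambda)=1$. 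The only mildly delicate step is the Campbell/Fubini computation eliminating atoms at $\pm u$, which is routine once the independence of the Poisson ground process and the cluster marks is used to isolate the scale variable $p$.
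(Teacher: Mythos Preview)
Your proof is correct and follows exactly the route the paper defers to (Lemma~3.1 in Basrak--Krizmani\'c--Segers~\cite{BKS}); the paper itself gives no argument beyond that reference. Two minor remarks: the time projection of the ground Poisson process does not literally have ``Lebesgue intensity'' since $\nu((0,\infty))=\infty$, but your conclusion still holds because $(Leb\times\nu)(\{0,1\}\times(0,\infty))=0$; and in the $\Lambda_2$ step you tacitly use that the times $T_i$ are almost surely pairwise distinct (so that all atoms of $N$ at any fixed $t$ come from a single cluster), which follows since for every $\epsilon>0$ the restriction of $\sum_i\delta_{(T_i,P_i)}$ to $[0,1]\times(\epsilon,\infty)$ is a Poisson process with finite, diffuse-in-time intensity.
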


 Now we will show that
$\phi^{(u)}$ is continuous on the set $\Lambda$.

\begin{lem}\label{l:contfunct}
The sum-maximum functional $\Phi^{(u)} \colon \mathbf{M}_{p}([0,1]
\times \EE) \to D([0,1], \mathbb{R}^{2})$ is continuous on the set $\Lambda$,
when $D([0,1], \mathbb{R}^{2})$ is endowed with the weak $M_{1}$ topology.
\end{lem}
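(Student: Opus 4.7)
The plan is to exploit that the weak $M_1$ topology on $D([0,1], \mathbb{R}^2)$ coincides with the product topology induced by the metric $d_p$ in (\ref{e:defdp}), so it suffices to verify continuity of each coordinate of $\Phi^{(u)}$ as a map into $D([0,1], \mathbb{R})$ equipped with the standard $M_1$ topology. I would write $\Phi^{(u)} = (\Phi^{(u)}_1, \Phi^{(u)}_2)$, with $\Phi^{(u)}_1$ the sum functional and $\Phi^{(u)}_2$ the running--maximum functional (clipped at $0$), and treat the two coordinates separately.

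For $\Phi^{(u)}_1$, I would follow the argument of Basrak et al.~\cite{BKS} (Lemma~3.2) and Krizmani\'{c}~\cite{Kr14}. Fix $\eta \in \Lambda$ and a vaguely convergent sequence $\eta_n \to \eta$ in $\mathbf{M}_p([0,1] \times \EE)$. Since $\eta$ is Radon and $[0,1] \times \EE_u$ is relatively compact in $[0,1] \times \EE$, $\eta$ has only finitely many atoms in $[0,1] \times \EE_u$; by vague convergence together with the defining properties of $\Lambda_1$ (no atoms at the time endpoints, no mass at the levels $\pm u$, $\pm\infty$), these atoms are matched one-to-one by atoms of $\eta_n$ of comparable time and spatial coordinates for all large $n$. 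For each cluster of big atoms at a common time, I would construct a piecewise-linear parametric representation of the partial-sum process that traces the atoms in the order dictated by $\eta_n$; the condition $\Lambda_2$ ensures that all atoms in such a cluster share a common sign, so the running sum is monotone across the cluster and the parametrization is consistent with the $M_1$ completion of the limit graph. Choosing $u' > u$ with $\eta([0,1] \times \{\pm u'\}) = 0$ and letting $u' \downarrow u$ absorbs the remaining mid-sized atoms into a vanishing error.

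For $\Phi^{(u)}_2$, the $M_1$-continuity of the running-max functional on analogous point-measure sets is classical (see Whitt~\cite{Whitt02}, Sections~13.4--13.6): the same matching of big atoms produces parametric representations of the extremal process that converge uniformly. Crucially, the max functional is indifferent to the order in which simultaneous atoms are processed, so the sign restriction $\Lambda_2$ is not needed for this coordinate, although the regularity assumptions encoded in $\Lambda_1$ still are.

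The principal obstacle is the sum component at times carrying several large atoms. In the $M_1$ topology each jump of the limit is completed by a straight segment, and the parametric representation must be weakly monotone along that segment. Were two atoms of opposite sign allowed to coalesce at a common time in the limit, one would be forced to reverse direction along the completed segment, precluding any admissible parametrization and destroying the $M_1$ approximation. The assumption $\Lambda_2$ rules this out and is exactly what keeps the one-dimensional argument working for the first coordinate; the same obstruction, applied jointly to the pair (sum, max), is what prevents upgrading the result to the strong $M_1$ topology on $D([0,1], \mathbb{R}^2)$, as will be illustrated in Section~\ref{s:four}.
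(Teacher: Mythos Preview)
Your proposal is correct and matches the paper's approach: reduce to coordinate-wise standard $M_1$ convergence via the product metric $d_p$ (Theorem~12.5.2 in Whitt~\cite{Whitt02}), invoke Lemma~3.2 of \cite{BKS} for the sum coordinate $\Phi^{(u)}_1$, and handle the maximum coordinate $\Phi^{(u)}_2$ separately. The only nuance is that the paper treats $\Phi^{(u)}_2$ by a slightly simpler device than building parametric representations or citing Sections~13.4--13.6 of \cite{Whitt02}: it establishes pointwise convergence of $\Phi^{(u)}_2(\eta_n)(t)$ on the dense set of atom-free times $t$ (by matching atoms above a small level $\epsilon$ and letting $\epsilon \downarrow 0$), and then invokes Corollary~12.5.1 in Whitt~\cite{Whitt02}, which says that for \emph{monotone} functions $M_1$ convergence is equivalent to pointwise convergence on a dense set plus the endpoints.
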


\begin{proof}
Take an arbitrary $\zeta \in \Lambda$ and suppose that $\zeta_{n} \vto \zeta$ in $\mathbf{M}_p([0,1] \times
\EE)$. We need to show that
$\Phi^{(u)}(\zeta_n) \to \Phi^{(u)}(\zeta)$ in $D([0,1],
\mathbb{R}^{2})$ according to the $WM_1$ topology. By
Theorem~12.5.2 in Whitt~\cite{Whitt02}, it suffices to prove that,
as $n \to \infty$,
$$ d_{p}(\Phi^{(u)}(\zeta_{n}), \Phi^{(u)}(\zeta)) =
\max_{k=1,2}d_{M_{1}}(\Phi^{(u)}_{k}(\zeta_{n}),
\Phi^{(u)}_{k}(\zeta)) \to 0.$$
Now one can follow, with small modifications, the lines in the proof of Lemma~3.2 in Basrak et al.~\cite{BKS} to obtain
$d_{M_{1}}(\Phi^{(u)}_{1}(\zeta_{n}), \Phi^{(u)}_{1}(\zeta)) \to 0$ as
$n \to \infty$.

Let
$$T= \{ t \in [0,1] : \zeta (\{t\} \times \EE) = 0 \}.$$
Since $\zeta$ is a Radon point measure, the set $T$ is dense in $[0,1]$. Fix $t \in T$ and take $\epsilon >0$ such that $\zeta([0,t] \times \{\epsilon\})=0$.
Later, when $\epsilon \downarrow 0$, we assume convergence to $0$ is through a sequence of values $(\epsilon_{j})$ such that $\zeta([0,t] \times \{\epsilon_{j}\})=0$ for all $j \in \mathbb{N}$ (this can be arranged since $\zeta$ is a Radon point measure). Since the set $[0,t] \times [\epsilon, \infty]$ is relatively compact in
$[0,1] \times \EE$, there exists a nonnegative integer
$k=k(\zeta)$ such that
$$ \zeta ([0,t] \times [\epsilon, \infty]) = k < \infty.$$
By assumption, $\zeta$ does not have any atoms on the border of the
set $[0,t] \times [\epsilon, \infty]$. Hence, by Lemma 7.1 in Resnick~\cite{Re07}, there exists a positive integer $n_{0}$ such
that for all $n \geq n_{0}$ it holds that
$$ \zeta_{n} ([0,t] \times [\epsilon, \infty])=k.$$
Let
$(t_{i},x_{i})$ for $i=1,\ldots,k$ be the atoms of $\zeta$ in
$[0,t] \times [\epsilon, \infty]$. By the same lemma, the $k$ atoms
$(t_{i}^{(n)}, x_{i}^{(n)})$ of $\zeta_{n}$ in $[0,t] \times [\epsilon, \infty]$ (for $n \geq n_{0}$) can be labeled in such a way that
for every $i \in \{1,\ldots,k\}$ we have
$$ (t_{i}^{(n)}, x_{i}^{(n)}) \to (t_{i},x_{i}) \qquad \textrm{as}
\ n \to \infty.$$ In particular, for any $\delta >0$ we can find a
positive integer $n_{\delta} \geq n_{0}$ such that for all $n \geq
n_{\delta}$,
\begin{equation*}\label{e:etaconv}
 |t_{i}^{(n)} - t_{i}| < \delta \quad \textrm{and} \quad
 |x_{i}^{(n)}- x_{i}| < \delta \qquad \textrm{for} \ i=1,\ldots,k.
\end{equation*}
If $k=0$, then (for large $n$) the atoms of $\zeta$ and $\zeta_{n}$ in $[0,t] \times \EE$ are all situated in $[0,t] \times [-\infty, \epsilon)$. Hence
$ \Phi^{(u)}_{2}(\zeta)(t) \in [0,\epsilon)$ and $ \Phi^{(u)}_{2}(\zeta_{n})(t) \in [0, \epsilon)$, which imply
\begin{equation}\label{e:conv1}
  |\Phi^{(u)}_{2}(\zeta_{n})(t) - \Phi^{(u)}_{2}(\zeta)(t)| < \epsilon.
\end{equation}
If $k \geq 1$, take $\delta = \epsilon$. Then we have
\begin{equation}\label{e:conv2}
  |\Phi^{(u)}_{2}(\zeta_{n})(t) - \Phi^{(u)}_{2}(\zeta)(t)| = \bigg| \bigvee_{i=1}^{k}x_{i}^{(n)} - \bigvee_{i=1}^{k}x_{i} \bigg| \leq \bigvee_{i=1}^{k}|x_{i}^{(n)}-x_{i}| <  \epsilon.
\end{equation}
Therefore form (\ref{e:conv1}) and (\ref{e:conv2}) we obtain
 $$\lim_{n \to \infty}|\Phi^{(u)}_{2}(\zeta_{n})(t) -
 \Phi^{(u)}_{2}(\zeta)(t)|< \epsilon,$$
  and if we let $\epsilon \to 0$, it follows that
 $\Phi^{(u)}_{2}(\zeta_{n})(t) \to \Phi^{(u)}_{2}(\zeta)(t)$ as $n \to
 \infty$. Note that $\Phi^{(u)}_{2}(\zeta)$ and $\Phi^{(u)}_{2}(\zeta_{n})$ are nondecreasing functions. Since $M_{1}$ convergence for monotone functions is equivalent to pointwise convergence in a dense subset of points plus convergence at the endpoints (Whitt~\cite{Whitt02}, Corollary 12.5.1), we conclude that $d_{M_{1}}(\Phi^{(u)}_{2}(\zeta_{n}),
 \Phi^{(u)}_{2}(\zeta)) \to 0$ as $n \to \infty$. Hence
$\Phi^{(u)}$ is continuous at $\zeta$.
\end{proof}

\smallskip

\smallskip

\subsection{Main theorem}

Let $(X_{n})$ be a strictly stationary sequence of random variables, regularly varying with index $\alpha \in (0,2)$. The theorem below gives conditions under which the joint partial sum and maxima
process $L_{n}$ satisfies a functional limit theorem with the limit $L = (V, W)$, where $V$ is an $\alpha$--stable L\'{e}vy process and $W$ is an extremal process.

The distribution of a L\'{e}vy process $V$ is characterized by its
characteristic triple, that is, the characteristic triple of the infinitely divisible distribution
of $V(1)$. The characteristic function of $V(1)$ and the characteristic triple
$(a, \sigma, b)$ are related in the following way:
\begin{equation}\label{e:Kintchin}
  \mathrm{E} [e^{izV(1)}] = \exp \biggl( -\frac{1}{2}az^{2} + ibz + \int_{\mathbb{R}} \bigl( e^{izx}-1-izx 1_{[-1,1]}(x) \bigr)\,\sigma(\rmd x) \biggr)
\end{equation}
for $z \in \mathbb{R}$. Here $a \ge 0$, $b \in \mathbb{R}$ are constants, and $\sigma$ is a measure on $\mathbb{R}$ satisfying
$$ \sigma ( \{0\})=0 \qquad \text{and} \qquad \int_{\mathbb{R}}(|x|^{2} \wedge 1)\,\sigma(\rmd x) < \infty.$$

\begin{rem}\label{r:stablechar}
 For $\alpha \in (0,2)$ it is sometimes convenient to rewrite
 the characteristic function of an $\alpha$--stable random variable $Y$ in the following form
\begin{equation}\label{e:charstab1}
  \mathrm{E}[e^{izY}] = \left\{ \begin{array}{lc}
       \exp \Big[ -c|z|^{\alpha} \big( 1 - i \beta \mathrm{sign}(z) \tan \frac{\pi \alpha}{2} \big)
           + i \tau z \Big], & \alpha \neq 1,\\[0.7em]
       \exp \Big[ -c|z| \big( 1 + i \beta \frac{2}{\pi} \mathrm{sign}(z) \log |z| \big)
       + i \tau z \Big], & \alpha =1,
                                 \end{array}\right.
\end{equation}
where $c>0$, $\beta \in [-1,1]$ and $\tau \in \mathbb{R}$ (Sato~\cite{Sa99}, Theorem 14.15). The representations of the characteristic function of a stable
distribution in the L\'{e}vy-Khintchine representation
(\ref{e:Kintchin}) and relation (\ref{e:charstab1}) are connected in the following way:
$$ a=0, \quad \sigma(dx) = \big( c_{1} 1_{(0,\infty)}(x) + c_{2}
1_{(-\infty,0)}(x) \big) \, |x|^{-1-\alpha}\,\rmd x \quad \textrm{and}
\quad b= \tau - d,$$ where
$$\begin{array}{lllll}
(i) & c_{1} = \frac{-c(1+\beta)}{2 \Gamma(-\alpha) \cos(\pi
  \alpha /2)}, & c_{2}=\frac{-c(1-\beta)}{2 \Gamma(-\alpha) \cos(\pi
  \alpha /2)}, & d=-\int_{|x| \leq 1}x\,\sigma(\rmd x), & \ \textrm{if} \ \alpha < 1;\\[0.8em]
(ii) & c_{1} = \frac{-c(1+\beta)}{2 \Gamma(-\alpha) \cos(\pi
  \alpha /2)}, & c_{2}=\frac{-c(1-\beta)}{2 \Gamma(-\alpha) \cos(\pi
  \alpha /2)}, & d=\int_{|x| > 1}x\,\sigma(\rmd x), & \ \textrm{if} \ \alpha > 1;\\[0.8em]
(iii) &  c_{1}=\frac{c(1+\beta)}{\pi}, &  c_{2}=\frac{c(1-\beta)}{\pi}, & d = (c_{1}-c_{2})s, & \ \textrm{if} \ \alpha=1;
\end{array}$$
with $ s = \int_{0}^{\infty}(\sin x - x1_{(0,1]}(x))x^{-2}\rmd x$
(see Lemma 2 in Feller~\cite{Feller71}, p.~541, and Theorem 14.3 and Lemma 14.11 in
Sato~\cite{Sa99}).
\end{rem}

The distribution of a nonnegative extremal process $W$ is characterized by its exponent measure $\nu''$ in the following way:
$$ \PP (W(t) \leq x ) = e^{-t \nu''(x,\infty)}$$
for $t>0$ and $x>0$, where $\nu''$ is a measure on $(0,\infty)$ satisfying
$ \nu'' (\delta, \infty) < \infty$
for any $\delta >0$ (see Resnick~\cite{Re07}, page 161).

The description of the characteristic triple of $V$ and the exponent measure of $W$ in the limiting process will be in terms of the measures $\nu'$ and $\nu''$ on $\mathbb{R}$ defined by
$$ \nu'(\rmd x) = \big( c_{+} 1_{(0,\infty)}(x) + c_{-} 1_{(-\infty, 0)}(x) \big) \theta \alpha |x|^{-\alpha -1}\,\rmd x$$
and
$$ \nu''(\rmd x) = r  \theta \alpha x^{-\alpha -1} 1_{(0,\infty)}(x)\,\rmd x,$$
where
$$ c_{+} = \mathrm{E} \bigg[ \Big( \sum_{j}\eta_{j} \Big)^{\alpha} 1_{ \{ \sum_{j}\eta_{j} > 0 \}} \bigg], \quad
   c_{-} = \mathrm{E} \bigg[ \Big( - \sum_{j}\eta_{j} \Big)^{\alpha} 1_{ \{ \sum_{j}\eta_{j} < 0 \}} \bigg]$$
and
$$ r = \mathrm{E} \Big( \bigvee_{j}\eta_{j} \vee 0 \Big)^{\alpha},$$
with $\theta$ and $(\eta_{j})_{j}$ as defined in Section~\ref{s:pp}.
When $\alpha \in [1,2)$ we need an additional assumption to deal with the small jumps.

\begin{cond}\label{c:step6cond}
For all $\delta > 0$,
$$
  \lim_{u \downarrow 0} \limsup_{n \to \infty} \PP \bigg[
  \max_{0 \le k \le n}  \bigg| \sum_{i=1}^{k} \bigg( \frac{X_{i}}{a_{n}}
  1_{ \big\{ \frac{|X_{i}|}{a_{n}} \le u \big\} } -  \E \bigg( \frac{X_{i}}{a_{n}}
  1_{ \big\{ \frac{|X_{i}|}{a_{n}} \le u \big\} } \bigg) \bigg) \bigg| > \delta
  \bigg]=0.$$
\end{cond}
Condition~\ref{c:step6cond} is not easily checked for dependent sequences, but for instance it holds for $\rho$--mixing processes with a certain rate (see Tyran-Kami\'{n}ska~\cite{TK2010}).
In the case $\alpha=1$ we will assume additionally
\begin{equation}\label{e:BPScond1}
 \mathrm{E} \bigg[ \sum_{j}\eta_{j} \log \Big( |\eta_{j}|^{-1} \sum_{i}|\eta_{i}| \Big) \bigg] < \infty,
\end{equation}
with the convention $\eta_{j} \log ( |\eta_{j}|^{-1} \sum_{i}|\eta_{i}|)=0$ if $\eta_{j}=0$  (for a discussion about condition (\ref{e:BPScond1}) see Basrak et al.~\cite{BaPlSo}, Remark 4.8).
Let
\begin{equation}\label{e:gamma}
 \gamma = \left\{ \begin{array}{ll}
                                   \frac{\theta \alpha}{1-\alpha} (c_{+}-c_{-}), & \quad \alpha \in (0,1),\\[0.6em]
                                   \frac{\alpha}{\alpha-1} \Big( p-q - \theta (c_{+}-c_{-}) \Big), & \quad \alpha \in (1,2),\\[0.6em]
                                   - \theta\mathrm{E}\bigg[ \sum_{j}\eta_{j} \textrm{log} \Big( \Big| \sum_{i}\eta_{i}\eta_{j}^{-1} \Big|  \Big) \bigg], & \quad \alpha =1,
                                 \end{array}\right.
\end{equation}
with $p$ and $q$ as in (\ref{e:mu}).

\begin{thm}\label{t:functconvergence}
Let $(X_{n})$ be a strictly stationary sequence of random variables, jointly regularly varying with index $\alpha\in(0,2)$, and of which the tail process $(Y_{i})_{i \in \mathbb{Z}}$ almost surely has no two values of the opposite sign. Suppose that Conditions~\ref{c:mixcond1} and~\ref{c:mixcond2} hold. If $\alpha \in [1,2)$ suppose also Condition~\ref{c:step6cond} and relation (\ref{e:MW1}) hold, and for $\alpha =1$ assume further that (\ref{e:BPScond1}) holds. Then the stochastic process
\begin{equation*}
  L_{n}(t) =
  \bigg( \sum_{k=1}^{[nt]} \frac{X_{k}}{a_{n}} -\floor{nt}b_{n}, \bigvee_{i=1}^{\lfloor nt \rfloor}\frac{X_{i}}{a_{n}} \bigg)
   \quad t \in [0,1],
\end{equation*}
with $a_{n}$ and $b_{n}$ as in (\ref{e:niz}) and (\ref{e:centb}),
satisfies
$$ L_{n} \dto L \qquad \textrm{as} \ n \to \infty,$$
in $D([0,1], \mathbb{R}^{2})$ endowed with the weak $M_{1}$ topology, where $L = (V, W)$, $V$
is an $\alpha$--stable L\'{e}vy process with characteristic triple $(0,
\nu', \gamma)$ and $W$ is an extremal process with exponent measure $\nu''$.
\end{thm}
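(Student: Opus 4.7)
The plan is to apply the continuous mapping theorem to the point process convergence (\ref{e:BaTa}) through the sum-maximum functional $\Phi^{(u)}$ and then pass to the limit $u \downarrow 0$ via an asymptotic negligibility argument. Fix $u > 0$ (chosen so that $\PP(|X_1| = u a_n) = 0$ for every $n$) and set $L_n^{(u)} := \Phi^{(u)}(N_n) = (V_n^{(u)}, W_n)$, where
\[
  V_n^{(u)}(t) = \sum_{k=1}^{\lfloor nt \rfloor} \frac{X_k}{a_n}\,\mathbf{1}_{\{|X_k|/a_n > u\}}.
\]
By Lemmas~\ref{l:prob1} and~\ref{l:contfunct} combined with (\ref{e:BaTa}), the continuous mapping theorem yields $L_n^{(u)} \dto L^{(u)} := \Phi^{(u)}(N) = (V^{(u)}, W)$ in $D([0,1], \mathbb{R}^2)$ with the weak $M_1$ topology, where $V^{(u)}(t) = \sum_{T_i \leq t}\sum_j P_i \eta_{ij}\,\mathbf{1}_{\{|P_i \eta_{ij}| > u\}}$ and $W(t) = \bigvee_{T_i \leq t, j} P_i \eta_{ij} \vee 0$.

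Next I would identify the limit $L = (V, W)$ obtained by letting $u \downarrow 0$ in $L^{(u)}$. A direct void-probability calculation, using the marking theorem for the Poisson process $\sum_i \delta_{(T_i, P_i)}$ and the substitution $y = x/p$, gives $\PP(W(t) \leq x) = \exp(-t \theta r x^{-\alpha}) = \exp(-t\,\nu''(x, \infty))$, so $W$ is the extremal process with exponent measure $\nu''$. For the sum coordinate, the crucial observation is that $\alpha \in (0,1)$ forces $\sum_{T_i \leq 1, j} |P_i \eta_{ij}| < \infty$ almost surely, so $V(t) := \sum_{T_i \leq t, j} P_i \eta_{ij}$ is a well-defined pure-jump c\`adl\`ag process with $V^{(u)} \to V$ uniformly on $[0,1]$ a.s. The L\'{e}vy measure of $V$ equals $\nu'$ by the Poisson mapping theorem applied to $(T_i, P_i, (\eta_{ij})_j) \mapsto (T_i, P_i \eta_{ij})$, and rewriting $\int (e^{izx} - 1)\,\nu'(dx)$ in standard L\'{e}vy--Khintchine form with cutoff $\mathbf{1}_{[-1,1]}$ extracts drift $\int_{|x| \leq 1} x\,\nu'(dx) = (c_+ - c_-)\theta\alpha / (1-\alpha) = c$ via Karamata's theorem. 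Hence $V$ has characteristic triple $(0, \nu', c)$ and $L^{(u)} \to L$ in the weak $M_1$ topology almost surely.

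The main technical obstacle is the asymptotic negligibility
\[
  \lim_{u \downarrow 0}\limsup_{n \to \infty} \PP\bigl(d_p(L_n, L_n^{(u)}) > \epsilon\bigr) = 0 \quad \text{for every } \epsilon > 0.
\]
Since the max coordinates of $L_n$ and $L_n^{(u)}$ coincide and $d_{M_1}$ is dominated by the uniform metric, this reduces to controlling $\sup_{t \in [0,1]} |V_n(t) - V_n^{(u)}(t)| \leq \sum_{k=1}^n (|X_k|/a_n)\mathbf{1}_{\{|X_k| \leq u a_n\}}$. By stationarity, Markov's inequality, Karamata's theorem, and (\ref{e:niz}),
\[
  \frac{n}{a_n}\,\EE\bigl[|X_1|\mathbf{1}_{\{|X_1| \leq u a_n\}}\bigr] \sim \frac{\alpha}{1-\alpha}\, u \cdot n\,\PP(|X_1| > u a_n) \longrightarrow \frac{\alpha\, u^{1-\alpha}}{1-\alpha},
\]
which vanishes as $u \downarrow 0$ precisely because $\alpha < 1$. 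This is the single place where the restriction $\alpha \in (0,1)$ is essential; neither Condition~\ref{c:mixcond1} nor~\ref{c:mixcond2} is needed here beyond stationarity. A standard triangular (Slutsky-type) convergence argument then combines the three pieces to give $L_n \dto L$ in the weak $M_1$ topology, completing the proof. I expect the most delicate part to be matching the drift constant $c$ to the natural parametrization of the L\'{e}vy limit---specifically, checking that the Karamata arithmetic on $\nu'|_{[-1,1]}$ exactly reproduces $(c_+ - c_-)\theta\alpha/(1-\alpha)$, and confirming that the two coordinates of $\Phi^{(u)}(N)$ arise from the same underlying cluster representation.
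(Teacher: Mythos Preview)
Your approach is essentially identical to the paper's: continuous mapping through $\Phi^{(u)}$ using Lemmas~\ref{l:prob1}--\ref{l:contfunct}, letting $u \downarrow 0$ on the limit side via absolute summability of the $P_i\eta_{ij}$ (which the paper also derives from $\mathrm{E}(\sum_j|\eta_{1j}|)^\alpha<\infty$ for $\alpha<1$), the Markov--Karamata negligibility bound on the prelimit side, and the Slutsky-type glue.

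The one inaccuracy is the claim that ``the max coordinates of $L_n$ and $L_n^{(u)}$ coincide.'' They do not: the second coordinate of $\Phi^{(u)}(N_n)$ is $\bigvee_{i\leq\lfloor nt\rfloor} X_i/a_n \vee 0$, while $W_n(t)=\bigvee_{i\leq\lfloor nt\rfloor} X_i/a_n$ can be negative. The paper handles the discrepancy by the bound $|W_n(t)-W_n(t)\vee 0|\leq (|X_1|/a_n)\,1_{\{X_1<0\}}\to 0$ in probability, so your negligibility estimate needs this extra (trivial) term; the gap is cosmetic, not structural.
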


\begin{proof}
Take an arbitrary $u>0$, and consider
$$ \Phi^{(u)}(N_{n})(\,\cdot\,) = \bigg( \sum_{i/n \leq\,\cdot}\frac{X_{i}}{a_{n}} 1_{ \big\{ \frac{|X_{i}|}{a_{n}} > u \big\} }, \bigvee_{i/n \leq\,\cdot}\frac{X_{i}}{a_{n}} \vee 0 \bigg).$$
From Lemma~\ref{l:prob1} and Lemma~\ref{l:contfunct} we know that $\Phi^{(u)}$ is continuous on the set $\Lambda$ and this set almost surely contains the limiting point process $N$ from (\ref{e:BaTa}). Hence an application of the continuous mapping theorem yields
$\Phi^{(u)}(N_{n}) \dto \Phi^{(u)}(N)$ in $D([0,1], \mathbb{R}^{2})$ under the weak $M_{1}$ topology, i.e.
\begin{equation}\label{e:mainconv0}
 \bigg( \sum_{i = 1}^{\lfloor n \, \cdot \, \rfloor} \frac{X_{i}}{a_{n}}
    1_{ \bigl\{ \frac{|X_{i}|}{a_{n}} > u \bigr\} }, \bigvee_{i=1}^{\lfloor n \, \cdot \, \rfloor}\frac{X_{i}}{a_{n}} \vee 0 \bigg)
    \dto   \bigg( \sum_{T_{i} \le \, \cdot} \sum_{j}P_{i}\eta_{ij} 1_{\{ P_{i}|\eta_{ij}| > u \}}, \bigvee_{T_{i} \le \, \cdot} \bigvee_{j}P_{i}\eta_{ij} \vee 0 \bigg).
 \end{equation}
From (\ref{e:onedimregvar}) we have, as $n \to \infty$,
\begin{eqnarray}\label{e:conv12}
 \nonumber \floor{nt} \mathrm{E} \Big( \frac{X_{1}}{a_{n}} 1_{ \big\{ u < \frac{|X_{1}|}{a_{n}} \leq 1 \big\} } \Big) &=& \frac{\floor{nt}}{n} \int_{u < |x| \leq 1} x n\,\PP \Big( \frac{X_{1}}{a_{n}} \in \rmd x \Big) \\[0.6em]
   & \to &  t \int_{u < |x| \leq 1} x\mu(\rmd x)
\end{eqnarray}
for every $t \in [0,1]$, and this convergence is uniform in $t$. Hence applying Lemma 2.1 from Krizmani\'{c}~\cite{Kr18} (adjusted for the $M_{1}$ convergence) to (\ref{e:mainconv0}) and (\ref{e:conv12}) we obtain, as $n \to \infty$,
\begin{multline}\label{e:mainconv}
      L_{n}^{(u)}(\,\cdot\,) := \bigg( \sum_{i = 1}^{\lfloor n \, \cdot \, \rfloor} \frac{X_{i}}{a_{n}}
    1_{ \bigl\{ \frac{|X_{i}|}{a_{n}} > u \bigr\} } - \floor{n\,\cdot\,}b_{n}^{(u)}, \bigvee_{i=1}^{\lfloor n \, \cdot \, \rfloor}\frac{X_{i}}{a_{n}} \vee 0 \bigg) \\
    \dto L^{(u)}(\,\cdot\,) :=  \bigg( \sum_{T_{i} \le \, \cdot} \sum_{j}P_{i}\eta_{ij} 1_{\{ P_{i}|\eta_{ij}| > u \}} - (\,\cdot\,) b^{(u)}, \bigvee_{T_{i} \le \, \cdot} \bigvee_{j}P_{i}\eta_{ij} \vee 0 \bigg),
\end{multline}
where
$$  b_{n}^{(u)} = \left\{ \begin{array}{cc}
                                   0, & \quad \alpha \in (0,1),\\[0.5em]
                                   \mathrm{E} \Big( \frac{X_{1}}{a_{n}} 1_{ \big\{ u < \frac{|X_{1}|}{a_{n}} \leq 1 \big\} } \Big), & \quad \alpha \in [1,2)
                                 \end{array}\right.$$
                                 and
$$ b^{(u)} = \left\{ \begin{array}{cc}
                                   0, & \quad \alpha \in (0,1),\\[0.6em]
                                    \int_{u < |x| \leq 1} x\mu(\rmd x), & \quad \alpha \in [1,2).
                                 \end{array}\right.$$
Now we split the rest of the proof into two cases: $\alpha \in (0,1)$ and $\alpha \in [1,2)$.

Assume first $\alpha \in (0,1)$.
Let $U_{i} = \sum_{j}\eta_{ij}$, $i=1,2,\ldots$.
Since $ \alpha < 1$, from (\ref{e:MW1}) we obtain
\begin{equation*}\label{e:MW}
 \mathrm{E}|U_{1}|^{\alpha} \leq \mathrm{E} \Big( \sum_{j}|\eta_{j}| \Big)^{\alpha} < \infty.
\end{equation*}
It is straightforward to check that $P_{i}|U_{i}|$, $i=1,2,\ldots$, are the points of a Poisson process with intensity measure $\theta \mathrm{E} |U_{1}|^{\alpha} \alpha x^{-\alpha-1}\,\rmd x$ for $x>0$ (see Proposition 5.2 and Proposition 5.3 in Resnick~\cite{Re07}). These points are summable, i.e.
 $$ \sum_{i=1}^{\infty}P_{i}|U_{i}| \leq \sum_{i=1}^{\infty}\sum_{j}P_{i}|\eta_{ij}| < \infty$$
 almost surely
 (see the proof of Theorem 3.1 in Davis and Hsing~\cite{DaHs95}), and therefore for all $t\in [0,1]$
$$ \sum_{T_{i} \leq t} \sum_{j} P_{i}\eta_{ij} 1_{\{ P_{i}|\eta_{ij}| > u \}} \to \sum_{T_{i} \leq t} \sum_{j}P_{i}\eta_{ij}$$
almost surely as $u \to 0$. By the dominated convergence theorem
$$ \sup_{t \in [0,1]} \bigg| \sum_{T_{i} \leq t} \sum_{j} P_{i}\eta_{ij} 1_{\{ P_{i}|\eta_{ij}| > u \}} - \sum_{T_{i} \leq t} \sum_{j}P_{i}\eta_{ij} \bigg| \leq \sum_{i=1}^{\infty}\sum_{j}P_{i}|\eta_{ij}| 1_{ \{ P_{i}|\eta_{ij}| \leq u \} } \to 0$$
almost surely as $u \to 0$.
Since uniform convergence implies Skorohod $M_{1}$ convergence, we get
\begin{equation}\label{e:dm1}
d_{M_{1}} \Big( \sum_{T_{i} \leq\,\cdot} \sum_{j} P_{i}\eta_{ij} 1_{\{ P_{i}|\eta_{ij}| > u \}}, \sum_{T_{i} \leq\,\cdot} \sum_{j}P_{i}\eta_{ij} \Big) \to 0
\end{equation}
almost surely as $u \to 0$.
Let
$$L(\,\cdot\,) = \bigg( \sum_{T_{i} \le \, \cdot} \sum_{j}P_{i}\eta_{ij}, \bigvee_{T_{i} \le \, \cdot} \bigvee_{j}P_{i}\eta_{ij} \vee 0 \bigg).$$
Recalling the definition of the metric $d_{p}$ in (\ref{e:defdp}), from (\ref{e:dm1}) we obtain
\begin{equation*}
d_{p}(L^{(u)}, L) \to 0
\end{equation*}
almost surely as $u \to 0$. Since almost sure convergence implies weak convergence, we have, as $u \to 0$,
\begin{equation}\label{e:mainconv2}
 L^{(u)} \dto L
\end{equation}
in $D([0,1], \mathbb{R}^{2})$ endowed with the weak $M_{1}$ topology.
By Proposition 5.2 and Proposition 5.3 in Resnick~\cite{Re07}, the process
$$\sum_{i} \delta_{(T_{i}, \sum_{j}P_{i}\eta_{ij} })$$
 is a Poisson process with intensity measure
$ Leb \times \nu'$.
Similarly, the process
$$\sum_{i} \delta_{(T_{i}, \bigvee_{j}P_{i}\eta_{ij} \vee 0})$$
 is a Poisson process with intensity measure
$ Leb \times \nu''$.
By the It\^{o} representation of the L\'{e}vy process (see Resnick~\cite{Re07}, pages 150--153) and Theorem 14.3 in Sato~\cite{Sa99},
$$ V (\,\cdot\,) = \sum_{T_{i} \leq\,\cdot} \sum_{j}P_{i}\eta_{ij}$$ is an $\alpha$--stable L\'{e}vy process with characteristic triple $(0, \nu', (c_{+}-c_{-}) \theta \alpha / (1-\alpha))$.
Also
$$ W (\,\cdot\,) = \bigvee_{T_{i} \leq\,\cdot} \bigvee_{j}P_{i}\eta_{ij} \vee 0$$
is an extremal process with exponent measure $\nu''$ (see Resnick~\cite{Re07}, page 161).

If we show that
$$ \lim_{u \to 0}\limsup_{n \to \infty} \PP(d_{p}(L_{n},L_{n}^{(u)}) > \epsilon)=0$$
for any $\epsilon >0$, from (\ref{e:mainconv}) and (\ref{e:mainconv2}) by a variant of Slutsky's theorem (see Theorem 3.5 in Resnick~\cite{Re07}) it will follow that
$ L_{n} \dto L$ as $n \to \infty$,
in $D([0,1], \mathbb{R}^{2})$ with the weak $M_{1}$ topology.
Since the
 metric $d_{p}$ on $D([0,1], \mathbb{R}^{2})$ is bounded above by the uniform metric on
 $D([0,1], \mathbb{R}^{2})$ (Whitt~\cite{Whitt02}, Theorem 12.10.3), it suffices to show that
 $$ \lim_{u \to 0} \limsup_{n \to \infty} \PP \biggl(
 \sup_{t \in [0,1]} \|L_{n}(t) - L_{n}^{(u)}(t)\| >
 \epsilon \biggr)=0.$$
 Using stationarity, Markov's inequality and the fact that
 $$\bigg| \bigvee_{i=1}^{\lfloor nt \rfloor}\frac{X_{i}}{a_{n}} - \bigvee_{i=1}^{ \lfloor nt \rfloor }\frac{X_{i}}{a_{n}} \vee 0 \bigg| \leq \frac{|X_{1}|}{a_{n}} 1_{\{ X_{1} < 0 \}},$$ we get the bound
 \begin{eqnarray}\label{e:slutsky}
    \nonumber \PP \bigg(
     \sup_{t \in [0,1]} \|L_{n}(t) - L_{n}^{(u)}(t)\| >  \epsilon \bigg) & & \\[0.3em]
   \nonumber & \hspace*{-20em} = & \hspace*{-10em} \PP \bigg(
       \sup_{t \in [0,1]} \ \max \bigg\{  \bigg| \sum_{i=1}^{\lfloor nt \rfloor} \frac{X_{i}}{a_{n}}
       1_{ \big\{ \frac{|X_{i}|}{a_{n}} \leq u \big\} }  \bigg|, \frac{|X_{1}|}{a_{n}} 1_{\{ X_{1} < 0 \}} \bigg\}  > \epsilon
       \bigg)\\[0.4em]
   \nonumber  & \hspace*{-20em} \leq & \hspace*{-10em} \PP \bigg(
       \sum_{i=1}^{n} \frac{|X_{i}|}{a_{n}}
       1_{ \big\{ \frac{|X_{i}|}{a_{n}} \leq u \big\} }  > \epsilon
       \bigg) + \PP \bigg( \frac{|X_{1}|}{a_{n}} 1_{\{ X_{1} < 0 \}} > \epsilon
       \bigg)\\[0.4em]
      & \hspace*{-20em} \leq & \hspace*{-10em}
      \epsilon^{-1} n \mathrm{E} \bigg( \frac{|X_{1}|}{a_{n}}
       1_{ \big\{ \frac{|X_{1}|}{a_{n}} \leq u \big\} } \bigg) + \PP \bigg( \frac{|X_{1}|}{a_{n}} 1_{\{ X_{1} < 0 \}} > \epsilon
       \bigg).
 \end{eqnarray}
 For the first term on the right-hand side of (\ref{e:slutsky}) we have
 $$n \mathrm{E} \bigg( \frac{|X_{1}|}{a_{n}}
       1_{ \big\{ \frac{|X_{1}|}{a_{n}} \leq u \big\} } \bigg) =  u \cdot n \PP (|X_{1}| > a_{n}) \cdot \frac{\PP(|X_{1}| > ua_{n})}{\PP(|X_{1}|>a_{n})} \cdot
          \frac{\mathrm{E}(|X_{1}| 1_{ \{ |X_{1}| \leq ua_{n} \} })}{ ua_{n} \PP (|X_{1}| >ua_{n})}.$$
 Since $X_{1}$ is a regularly varying random variable with index $\alpha$, it follows immediately that
  $$ \frac{\PP(|X_{1}| > ua_{n})}{\PP(|X_{1}|>a_{n})} \to u^{-\alpha}$$
  as $n \to \infty$. By Karamata's theorem
  $$ \lim_{n \to \infty} \frac{\mathrm{E}(|X_{1}| 1_{ \{ |X_{1}| \leq ua_{n} \} })}{ ua_{n} \PP (|X_{1}| >ua_{n})} = \frac{\alpha}{1-\alpha}.$$
 Therefore, taking into account relation (\ref{e:niz}), we get
 $$ n \mathrm{E} \bigg( \frac{|X_{1}|}{a_{n}}
       1_{ \big\{ \frac{|X_{1}|}{a_{n}} \leq u \big\} } \bigg) \to u^{1-\alpha} \frac{\alpha}{1-\alpha}$$
 as $n \to \infty$. Observe that
 $$ \frac{|X_{1}|}{a_{n}} 1_{\{X_{1} < 0\}} \to 0$$
 almost surely as $n \to \infty$, and thus
 \begin{equation}\label{e:onevariable}
   \PP \bigg( \frac{|X_{1}|}{a_{n}} 1_{\{ X_{1} < 0 \}} > \epsilon
       \bigg) \to 0
 \end{equation}
 as $n \to \infty$. Therefore from (\ref{e:slutsky}) we obtain
 $$ \limsup_{n \to \infty} \PP \bigg(
     \sup_{t \in [0,1]} \|L_{n}(t) - L_{n}^{(u)}(t)\| >  \epsilon \bigg) \leq \epsilon^{-1} u^{1-\alpha} \frac{\alpha}{1-\alpha}.$$
 Letting $u \to 0$, since $1-\alpha >0$, we finally obtain
 $$ \lim_{u \to 0} \limsup_{n \to \infty} \PP \bigg(
     \sup_{t \in [0,1]} \|L_{n}(t) - L_{n}^{(u)}(t)\| >  \epsilon \bigg) = 0.$$

Assume now $\alpha \in [1,2)$. Since (\ref{e:MW1}) holds by assumption in this case, by Lemma 6.4 in Basrak et al.~\cite{BaPlSo} there exists an $\alpha$--stable L\'{e}vy process $V$ such that, as $u \to 0$ (along some subsequence)
$$ \sum_{T_{i} \le \, \cdot} \sum_{j}P_{i}\eta_{ij} 1_{\{ P_{i}|\eta_{ij}| > u \}} - (\,\cdot\,) b^{(u)} \to V(\,\cdot\,)$$
uniformly almost surely.
As in the case $\alpha \in (0,1)$ we obtain, as $u \to 0$,
\begin{equation}\label{e:mainconv2-2}
 L^{(u)} \dto L
\end{equation}
in $D([0,1], \mathbb{R}^{2})$ endowed with the weak $M_{1}$ topology, where
$$L(\,\cdot\,) = \bigg( \lim_{u \to 0} \Big( \sum_{T_{i} \le \, \cdot} \sum_{j}P_{i}\eta_{ij} 1_{\{ P_{i}|\eta_{ij}| > u \}} - (\,\cdot\,) b^{(u)} \Big), \bigvee_{T_{i} \le \, \cdot} \bigvee_{j}P_{i}\eta_{ij} \vee 0 \bigg).$$
As before, we need to show that
 $$ \lim_{u \to 0} \limsup_{n \to \infty} \PP \biggl(
 \sup_{t \in [0,1]} \|L_{n}(t) - L_{n}^{(u)}(t)\| >
 \epsilon \biggr)=0,$$
 since then by the Slutsky argument it will follow $ L_{n} \dto L$ as $n \to \infty$,
in $D([0,1], \mathbb{R}^{2})$ with the weak $M_{1}$ topology. Recalling the definitions of $L_{n}$ and $L_{n}^{(u)}$, we have
\begin{eqnarray}\label{e:slutsky-2}
    \nonumber \PP \bigg(
     \sup_{t \in [0,1]} \|L_{n}(t) - L_{n}^{(u)}(t)\| >  \epsilon \bigg) & & \\[0.3em]
   \nonumber & \hspace*{-27em} = & \hspace*{-13.5em} \PP \bigg(
       \sup_{t \in [0,1]} \ \max \bigg\{  \bigg| \sum_{i=1}^{\lfloor nt \rfloor} \frac{X_{i}}{a_{n}}
       1_{ \big\{ \frac{|X_{i}|}{a_{n}} \leq u \big\} } - \lfloor nt \rfloor  \mathrm{E} \Big( \frac{X_{1}}{a_{n}} 1_{ \big\{  \frac{|X_{1}|}{a_{n}} \leq u \big\} } \Big) \bigg|, \frac{|X_{1}|}{a_{n}} 1_{\{ X_{1} < 0 \}} \bigg\}  > \epsilon
       \bigg)\\[0.4em]
   \nonumber & \hspace*{-27em} \leq & \hspace*{-13.5em} \PP \bigg( \max_{0 \leq k \leq n}
       \bigg| \sum_{i=1}^{k} \bigg( \frac{X_{i}}{a_{n}}
       1_{ \big\{ \frac{|X_{i}|}{a_{n}} \leq u \big\} } - \mathrm{E} \bigg( \frac{X_{i}}{a_{n}}
       1_{ \big\{ \frac{|X_{i}|}{a_{n}} \leq u \big\} } \bigg) \bigg) \bigg| > \epsilon
       \bigg) + \PP \bigg( \frac{|X_{1}|}{a_{n}} 1_{\{ X_{1} < 0 \}} > \epsilon
       \bigg).
 \end{eqnarray}
 Therefore, from Condition~\ref{c:step6cond} and (\ref{e:onevariable}) it follows
 $$ \lim_{u \to 0} \limsup_{n \to \infty} \PP \bigg(
     \sup_{t \in [0,1]} \|L_{n}(t) - L_{n}^{(u)}(t)\| >  \epsilon \bigg) = 0.$$
To conclude the proof it remains to show that $(0,\nu', \gamma)$ is the characteristic triple of the process $V$. According to Basrak et al.~\cite{BaPlSo}, Remark 4.7 (see also Davis and Hsing~\cite{DaHs95}, Theorem 3.2), the characteristic function of $V(1)$ is of the form
$$ \mathrm{E}[e^{izV(1)}] = \left\{ \begin{array}{lc}
                                   \mathrm{exp} \Big[ -c|z|^{\alpha} \Big( 1 - i\beta \textrm{sign}(z) \textrm{tan} \frac{\pi \alpha}{2} \Big) + i\tau z \Big], & \alpha > 1,\\[0.7em]
                                    \mathrm{exp} \Big[ -c|z| \Big( 1 + i\beta \frac{2}{\pi} \textrm{sign}(z) \textrm{log}|z| \Big) + i\tau z \Big], & \alpha =1,
                                 \end{array}\right.$$
where, with the notation $x^{\langle \alpha \rangle} = x |x|^{\alpha-1} = |x|^{\alpha} ( 1_{(0,\infty)}(x) - 1_{(-\infty,0)}(x))$,
\begin{equation}\label{e:characttr1}
 c= \theta \mathrm{E} \bigg[ \Big| \sum_{j} \eta_{j} \Big|^{\alpha} \bigg], \qquad \beta = \frac{\mathrm{E}[( \sum_{j}\eta_{j})^{\langle \alpha \rangle}]}{\mathrm{E}[|\sum_{j}\eta_{j}|^{\alpha}]}
\end{equation}
and
\begin{equation}\label{e:characttr2}
 \tau = \left\{ \begin{array}{lc}
                                    (\alpha-1)^{-1} \alpha \theta \mathrm{E} \Big[ \sum_{j}\eta_{j}^{\langle \alpha \rangle} \Big], & \alpha >1,\\[0.7em]
                                   \theta \Big(s \mathrm{E} \Big[ \sum_{j}\eta_{j} \Big] - \mathrm{E}\Big[ \sum_{j}\eta_{j} \textrm{log} \big( \big| \sum_{i}\eta_{i}\eta_{j}^{-1} \big|  \big) \Big] \Big), & \alpha =1,
                                 \end{array}\right.
\end{equation}
with $ s = \int_{0}^{\infty}(\sin x - x1_{(0,1]}(x))x^{-2}\rmd x$. Then, as noted in Remark~\ref{r:stablechar}, the characteristic triple of the process $V$ is of the form $(0, \sigma, b)$, where
$\sigma(\rmd x)=\big( c_{1} 1_{(0,\infty)}(x) + c_{2}1_{(-\infty,0)}(x) \big)|x|^{-\alpha-1}\rmd x$ and $b=\tau-d$, with
$$c_{1} = \left\{ \begin{array}{ll}
 \frac{-c(1+\beta)}{2 \Gamma(-\alpha) \cos(\pi
  \alpha /2)}, & \alpha > 1,\\[0.7em]
  \frac{c(1+\beta)}{\pi}, & \alpha=1,
\end{array}\right.
\qquad
c_{2} = \left\{ \begin{array}{ll}
 \frac{-c(1-\beta)}{2 \Gamma(-\alpha) \cos(\pi
  \alpha /2)}, & \alpha > 1,\\[0.7em]
  \frac{c(1-\beta)}{\pi}, & \alpha=1
\end{array}\right.$$
and
$$ d = \left\{ \begin{array}{ll}
  \int_{|x| > 1}x\,\sigma(\rmd x), & \alpha > 1,\\[0.7em]
  (c_{1}-c_{2})s, & \alpha=1.
\end{array}\right.$$
Hence taking into consideration relations (\ref{e:characttr1}) and (\ref{e:characttr2}), by standard computations we obtain
$$ c_{1}= \alpha \theta \mathrm{E} \bigg[ \Big| \sum_{j}\eta_{j} \Big|^{\alpha} 1_{ \{ \sum_{j}\eta_{j} > 0 \}} \bigg] = \alpha \theta c_{+}, \quad
   c_{2} = \alpha \theta \mathrm{E} \bigg[ \Big| \sum_{j}\eta_{j} \Big|^{\alpha} 1_{ \{ \sum_{j}\eta_{j} < 0 \}} \bigg] = \alpha \theta c_{-},$$
and
$$ d= \left\{ \begin{array}{ll}
  \frac{c_{1}-c_{2}}{\alpha -1} = \frac{\theta \alpha}{\alpha -1} (c_{+}-c_{-}), & \alpha > 1\\[0.7em]
  (c_{1}-c_{2})s = \theta (c_{+}-c_{-})s , & \alpha=1
\end{array}\right..$$
Therefore $\sigma=\nu'$. For $\alpha \in (1,2)$, as shown in the proof of Theorem 3.2 in Davis and Hsing~\cite{DaHs95}, it holds that
$\theta \mathrm{E} \Big[ \sum_{j}\eta_{j}^{\langle \alpha \rangle} \Big]=p-q$, and this implies
$$ b = \tau-d = \frac{\alpha}{\alpha-1} \Big( p-q - \theta (c_{+}-c_{-}) \Big).$$
Note that $\mathrm{E} \Big[ \sum_{j}\eta_{j} \Big] = c_{+}-c_{-}$ for $\alpha=1$, which yields
$$b=\tau - d = - \theta\mathrm{E}\bigg[ \sum_{j}\eta_{j} \textrm{log} \Big( \Big| \sum_{i}\eta_{i}\eta_{j}^{-1} \Big|  \Big) \bigg].$$
Hence $b = \gamma$, and thus the L\'{e}vy process $V$ has characteristic triple $(0,\nu',\gamma)$.
\end{proof}

\begin{rem}\label{r:jointdepend}
From the proof of Theorem~\ref{t:functconvergence} it follows that the components of the limiting process $L=(V, W)$ can be expressed as functionals of the limiting point process $N = \sum_{i} \sum_{j} \delta_{(T_{i}, P_{i}\eta_{ij})}$ from relation (\ref{e:BaTa}), i.e.
$$ V(\,\cdot\,) =  \left\{ \begin{array}{ll}
  \sum_{T_{i} \leq\,\cdot} \sum_{j}P_{i}\eta_{ij}, & \quad \alpha \in (0,1),\\[0.7em]
  \lim_{u \to 0} \Big( \sum_{T_{i} \le \, \cdot} \sum_{j}P_{i}\eta_{ij} 1_{\{ P_{i}|\eta_{ij}| > u \}} - (\,\cdot\,)  \int_{u < |x| \leq 1} x\mu(\rmd x) \Big), & \quad \alpha \in [1,2),
\end{array}\right.$$
where the limit in the latter case holds almost surely uniformly on $[0,1]$ (along some subsequence), and
$$ W(\,\cdot\,) = \bigvee_{T_{i} \leq\,\cdot} \bigvee_{j}P_{i}\eta_{ij} \vee 0.$$
\end{rem}

\begin{rem}\label{r:j1m1}
The weak $M_{1}$ convergence in Theorem~\ref{t:functconvergence} in
general can not be replaced by the standard $M_{1}$ convergence. This is
shown in Example~\ref{ex:WM1M1}.
The problem in our proof if we consider the standard $M_{1}$ topology is Lemma~\ref{l:contfunct}, which in this case does not hold.
To see this, fix $u>0$ and define
  $$ \zeta_{n} = \delta_{(\frac{1}{2}- \frac{1}{n}, \frac{u}{2})} + \delta_{(\frac{1}{2}, 2u)} \qquad \textrm{for} \  n \geq 3.$$
 Then $\zeta_{n} \vto \zeta$, where
  $$\zeta = \delta_{(\frac{1}{2}, \frac{u}{2})} + \delta_{(\frac{1}{2}, 2u)} \in \Lambda.$$
 It is easy to compute
 $$ \Phi^{(u)}_{1}(\zeta_{n})(t) = 2u\,1_{[\frac{1}{2}, 1]}(t) \quad \textrm{and} \quad \Phi^{(u)}_{2}(\zeta_{n})(t) = \frac{u}{2}\,1_{[\frac{1}{2} - \frac{1}{n}, \frac{1}{2}]}(t) + 2u\,1_{[\frac{1}{2}, 1]}(t).$$
 Then
 $$ y_{n} (t) := \Phi^{(u)}_{2}(\zeta_{n})(t) - \Phi^{(u)}_{1}(\zeta_{n})(t) = \frac{u}{2}\,1_{[\frac{1}{2} - \frac{1}{n}, \frac{1}{2})}(t), \quad t \in [0,1],$$
 and similarly
 $$ y(t) := \Phi^{(u)}_{2}(\zeta)(t) - \Phi^{(u)}_{1}(\zeta)(t)=0, \quad t \in [0,1].$$
 Hence $d_{M_{1}}(y_{n}, y) \geq u/2$ for all $n \geq 3$, which means that $d_{M_{1}}(y_{n}, y)$ does not converge to zero as $n \to \infty$.
 Since
 $$ d_{M_{1}}(y_{n}, y) \leq d_{M_{1}}(\Phi^{(u)}(\zeta_{n}), \Phi^{(u)}(\zeta))$$
 (Whitt~\cite{Whitt02}, Theorem 12.7.1),
 we conclude that $d_{M_{1}}(\Phi^{(u)}(\zeta_{n}), \Phi^{(u)}(\zeta))$ does not converge to zero. Therefore the functional $\Phi^{(u)}$ is not continuous at $\zeta$ with respect to the standard $M_{1}$ topology.
\end{rem}

\section{Examples}\label{s:four}

Various classes of stationary sequences are covered by our main theorem, such as squared GARCH processes, linear processes, moving maxima and ARMAX processes (see Basrak et al.~\cite{BKS} and Krizmani\'{c}~\cite{Kr14}). Here we present in detail only linear processes and moving maxima processes, and for the latter we show that Theorem~\ref{t:functconvergence} fails to hold under the standard $M_{1}$ topology on $D[0,1], \mathbb{R}^{2})$.

\begin{ex}(Linear processes)
Let $(Z_{i})_{i \in \mathbb{Z}}$ be an i.i.d.~sequence of regularly varying random variables with index $\alpha \in (0,2)$. Consider the linear process of the form
\begin{equation}\label{e:madefn}
X_{i} = \sum_{j=0}^{\infty}\varphi_{j}Z_{i-j}, \qquad i \in \mathbb{Z},
\end{equation}
where the sequence of real numbers $(\varphi_{j})$ is such that
$$ \sum_{j=0}^{\infty}|\varphi_{j}|^{\delta} < \infty \quad \textrm{for some} \ 0 < \delta < \alpha,\,\delta \leq 1$$
(under this condition the series in (\ref{e:madefn}) is a.s.~convergent, see Resnick~\cite{Re87}, Section 4.5). We assume also $\sum_{j=0}^{\infty}\varphi_{j} \neq 0$. It holds that
$$ \lim_{x \to \infty} \frac{\PP (|X_{0}|>x)}{\PP(|Z_{0}|>x)} = \sum_{j=0}^{\infty}|\varphi_{j}|^{\alpha}$$
(Cline~\cite{Cl83}, Theorem 2.3).
The tail process $(Y_{n})$ of the linear process $(X_{n})$ is of the following form:
$$ Y_{n} = \frac{\varphi_{n+K}}{|\varphi_{K}|} |Y_{0}| \Theta^{Z}, \qquad n \in \mathbb{Z},$$
where $\Theta^{Z}$ is an $\{-1,1\}$--valued random variable such that $\PP (\Theta^{Z} =1)=p$ and $\PP (\Theta^{Z}=0)=q$, and $K$ is an integer valued random variable, independent of $\Theta^{Z} = Y_{0}/|Y_{0}|$, with distribution given by
$$ \PP (K=j) = \frac{|\varphi_{j}|^{\alpha}}{\sum_{i=0}^{\infty}|\varphi_{i}|^{\alpha}}, \qquad j \geq 0$$
(see Meinguet and Segers~\cite{MeSe10}, Example 9.2). In~\cite{MeSe10} the extremal index $\theta$ from (\ref{E:theta:spectral}) was also computed, and it is given by
$$ \theta = \frac{\max_{j \geq 0}|\varphi_{j}|^{\alpha}}{\sum_{j=0}^{\infty}|\varphi_{j}|^{\alpha}}.$$
 Then, as noted in Basrak et al.~\cite{BaPlSo}, Section 3.3, it holds that
 $$ \sum_{j} \delta_{\eta_{j}} \eqd \sum_{j} \delta_{\Theta^{Z} \varphi_{j}/ \max_{i \geq 0}|\varphi_{i}| }.$$
 Assume now all coefficients $\varphi_{j}$ are of the same sign. This assumption ensures that the tail process $(Y_{n})$ has no two values of the opposite sign. Theorem~\ref{t:functconvergence} directly applies to the case of a finite order linear processes, since in this case all conditions in the mentioned theorem are satisfied (see Basrak et al.~\cite{BKS}, Example 4.3). Hence for $X_{i} = \sum_{j=0}^{m}\varphi_{j}Z_{i-j}$, according to Remark~\ref{r:jointdepend}, we have
 $$ V(\,\cdot\,) = \left\{ \begin{array}{ll}
        \displaystyle  \frac{\sum_{j=0}^{m}\varphi_{j}}{\kappa} \sum_{T_{i} \leq\,\cdot} P_{i} \Theta_{i}^{Z}      , & \alpha \in (0,1),\\[1em]
           \displaystyle \lim_{u \to 0} \bigg( \frac{1}{\kappa} \sum_{T_{i} \leq\,\cdot} P_{i} \Theta_{i}^{Z} \sum_{j=0}^{m} \varphi_{j} 1_{\{ P_{i}|\varphi_{j}| / \kappa \}} - (\,\cdot\,) \int_{u < |x| \leq 1}x \mu(\rmd x) \bigg)   , & \alpha \in [1,2),
\end{array}\right.$$
and
$$
W(\,\cdot\,) = \frac{1}{\kappa} \bigvee_{T_{i} \leq\,\cdot} P_{i} \bigvee_{j=0}^{m} \Theta_{i}^{Z} \varphi_{j} \vee 0,
$$
where $ \kappa := \max_{0 \leq j \leq m}|\varphi_{j}|$ and $\Theta_{i}^{Z}$, $i \geq 1$, are i.i.d.~copies of $\Theta^{Z}$. The characteristic triple of the process $V$ is $(0, \nu', \gamma)$, where
$$ \nu'(\rmd x) = \frac{( \sum_{j=0}^{m}|\varphi_{j}|)^{\alpha}}{\sum_{j=0}^{m}|\varphi_{j}|^{\alpha}} A(x) \alpha |x|^{-\alpha-1} \rmd x, $$
$$ A(x) = \left\{ \begin{array}{ll}
     p 1_{(0,\infty)}(x) + q 1_{(-\infty,0)}(x), & \textrm{if all coefficients} \ \varphi_{j} \ \textrm{are non-negative},\\[0.6em]
      q1_{(0,\infty)}(x) + p 1_{(-\infty,0)}(x), & \textrm{if all coefficients} \ \varphi_{j} \ \textrm{are non-positive},
\end{array}\right.$$
with $p$ and $q$ as in (\ref{e:mu}), and $\gamma$ as in (\ref{e:gamma}) with
$$c_{+}= \left\{ \begin{array}{ll}
         \frac{p}{\kappa^{\alpha}} (\sum_{j=0}^{m} |\varphi_{j}| )^{\alpha}, & \textrm{if all coefficients} \ \varphi_{j} \ \textrm{are non-negative},\\[0.6em]
         \frac{q}{\kappa^{\alpha}} (\sum_{j=0}^{m} |\varphi_{j}| )^{\alpha}, & \textrm{if all coefficients} \ \varphi_{j} \ \textrm{are non-positive},
\end{array}\right.$$
$$ c_{-} = \left\{ \begin{array}{ll}
         \frac{q}{\kappa^{\alpha}} (\sum_{j=0}^{m} |\varphi_{j}| )^{\alpha}, & \textrm{if all coefficients} \ \varphi_{j} \ \textrm{are non-negative},\\[0.6em]
         \frac{p}{\kappa^{\alpha}} (\sum_{j=0}^{m} |\varphi_{j}| )^{\alpha}, & \textrm{if all coefficients} \ \varphi_{j} \ \textrm{are non-positive}.
\end{array}\right.$$
For $\alpha=1$, $\gamma$ reduces to
$$ \gamma = \frac{q-p}{\sum_{j=0}^{m}|\varphi_{j}|} \sum_{j=0}^{m}\varphi_{j} \log \bigg( \bigg| \frac{1}{\varphi_{j}} \sum_{i=0}^{m}\varphi_{i} \bigg| \bigg).$$
The exponent measure of the process $W$ is of the form
$$\nu''(\rmd x) =  \frac{r \kappa^{\alpha}}{\sum_{j=0}^{m}|\varphi_{j}|^{\alpha}}
 \alpha x^{-\alpha -1} 1_{(0,\infty)}(x)\,\rmd x,$$
 where
$$ r= \left\{ \begin{array}{ll}
         p, & \textrm{if all coefficients} \ \varphi_{j} \ \textrm{are non-negative},\\[0.6em]
        q, & \textrm{if all coefficients} \ \varphi_{j} \ \textrm{are non-positive}.
\end{array}\right.$$
One can show that these expressions are consistent with the results obtained in Krizmanic~\cite{Kr18} for linear processes from a regularly varying distribution with index $\alpha \in (0,2)$ (with some differences due to different centering and normalizing sequences that are being used).

For infinite order linear processes with all coefficients of the same sign the idea is to approximate them by a sequence of finite order linear processes for which Theorem~\ref{t:functconvergence} applies, and to show that the error of approximation is negligible in the limit (for an example of this procedure see Krizmani\'{c}~\cite{Kr18}, Section 4).
\end{ex}

\begin{ex}\label{ex:WM1M1} (Moving maxima).
Let $(Z_n)_{n \in \mathbb{Z}}$ be a sequence of i.i.d.~Fr\'{e}chet random variables with shape parameter $\alpha \in (0,2)$, i.e. $\PP(Z_{n} \leq x) = e^{-x^{-\alpha}}$ for $x>0$. Hence $Z_{n}$ is regularly varying with index $\alpha$. For simplicity we consider only the case $\alpha \in (0,1)$.
Consider the finite order moving maxima
$$
X_n = \max_{i=0,\ldots,m}\{c_{i}Z_{n-i}\}, \quad {n \in \mathbb{Z}},
$$
where $m \in \mathbb{N}$ and $c_{0}, \ldots, c_{m}$ are nonnegative constants such that at least $c_{0}$ and $c_{m}$ are not equal to zero. Take a sequence of positive real numbers $(a_{n})$ such that
$n \PP (X_{0}>a_{n}) \to 1$ as $n \to \infty$. Then
\begin{equation}\label{e:clinemm}
\lim_{n \to \infty} n \PP(Z_{0}>a_{n}) = \frac{1}{\sum_{i=0}^{m}c_{i}^{\alpha}}
\end{equation}
(Cline~\cite{Cl83}, Theorem 2.3).
The random process $(X_{n})$ is jointly regularly varying with index $\alpha$ (Tafro~\cite{TafroPHD}, Example 2.1.12). Since the sequence $(X_{n})$ is $m$--dependent, it follows immediately that Conditions~\ref{c:mixcond1} and~\ref{c:mixcond2} hold (see for instance Example 5.1 in Krizmani\'{c}~\cite{Kr14}).
Therefore $(X_{n})$ satisfies all conditions of Theorem~\ref{t:functconvergence}, and the corresponding stochastic process $L_{n}$ converges in distribution in $D([0,1], \mathbb{R}^{2})$ under the weak $M_{1}$ topology.

Next we show that $L_{n}$ does not converge in distribution under the standard $M_{1}$ topology on $D([0,1], \mathbb{R}^{2})$. This shows that the weak $M_{1}$ topology in Theorem~\ref{t:functconvergence} in general can not be replaced by the standard $M_{1}$ topology. In showing this we use, with appropriate modifications, a combination of arguments used by Basrak and Krizmani\'{c}~\cite{BaKr} in their Example 4.1 and Avram and Taqqu~\cite{AvTa92} in their Theorem 1 (see also Example 5.1 in Krizmani\'{c}~\cite{Kr17}).

For simplicity take $m=1$ and $c_{0}=c_{1}=1$.
We have $X_{n}= Z_{n} \vee Z_{n-1}$ and $L_{n}(t) = (V_{n}(t), W_{n}(t))$, where
$$ V_{n}(t) = \sum_{j=1}^{\floor{nt}} \frac{X_{j}}{a_{n}} \quad \textrm{and} \quad W_{n}(t) = \bigvee_{j=1}^{\floor{nt}} \frac{X_{j}}{a_{n}}.$$
Let
 $$ G_{n}(t) := V_{n}(t) - 2W_{n}(t), \quad t \in [0,1].$$
The first step is to show that $G_{n}$ does not converge in distribution in $D([0,1], \mathbb{R})$ endowed with the (standard) $M_{1}$ topology. For this, according to Skorohod~\cite{Sk56} (see also Proposition 2 in Avram and Taqqu~\cite{AvTa92}), it suffices to show that
 \begin{equation}\label{e:osc1}
 \lim_{\delta \to 0} \limsup_{n \to \infty} \PP ( \omega_{\delta}(G_{n}) > \epsilon ) > 0
 \end{equation}
 for some $\epsilon >0$, where
 $$ \omega_{\delta}(x) = \sup_{{\footnotesize \begin{array}{c}
                                t_{1} \leq t \leq t_{2} \\
                                0 \leq t_{2}-t_{1} \leq \delta
                              \end{array}}
} M(x(t_{1}), x(t), x(t_{2}))$$
($x \in D([0,1], \mathbb{R}), \delta >0)$ and
$$ M(x_{1},x_{2},x_{3}) = \left\{ \begin{array}{ll}
                                   0, & \ \ \textrm{if} \ x_{2} \in [x_{1}, x_{3}], \\
                                   \min\{ |x_{2}-x_{1}|, |x_{3}-x_{2}| \}, & \ \ \textrm{otherwise},
                                 \end{array}\right.$$
Note that $M(x_{1},x_{2},x_{3})$ is the distance from $x_{2}$ to $[x_{1}, x_{3}]$, and $\omega_{\delta}(x)$ is the $M_{1}$ oscillation of $x$.

Let $i'=i'(n)$ be the index at which $\max_{1 \leq i \leq n-1}Z_{i}$ is obtained. Fix $\epsilon >0$ and introduce the events
 $$A_{n,\epsilon} = \{ Z_{i'} > \epsilon a_{n} \} = \Big\{ \max_{1 \leq i \leq n-1}Z_{i} > \epsilon a_{n}\Big\}$$
 and
 $$ B_{n,\epsilon} = \{Z_{i'}>\epsilon a_{n} \ \textrm{and} \ \exists\,l
 \neq 0, -i' \leq l \leq 1, \ \textrm{such that} \ Z_{i'+l} > \epsilon a_{n} / 4 \}.$$
Using the facts that $(Z_{i})$ is an i.i.d.~sequence and $n
 \PP(Z_{1}> c a_{n}) \to c^{-\alpha}/2$ as $n \to \infty$ for $c>0$
 (which follows from the regular variation property of $Z_{1}$ and (\ref{e:clinemm})) we get
 \begin{equation}\label{e:limAn}
  \lim_{n \to \infty}\PP(A_{n,\epsilon}) = 1 - e^{-\epsilon^{-\alpha}/2} \end{equation}
and
 \begin{equation}\label{e:limBn}
 \limsup_{n \to \infty} \PP(B_{n,\epsilon})  \leq  \frac{\epsilon^{-2\alpha}}{4^{1-\alpha}}
 \end{equation}
(see Example 5.1 in Krizmani\'{c}~\cite{Kr14}).
On the event $A_{n,\epsilon} \setminus B_{n,\epsilon}$
one has $Z_{i'} > \epsilon a_{n}$ and $Z_{i'+l} \leq \epsilon a_{n}/4$ for every $l \neq 0$, $-i' \leq l \leq 1$, so that
$$ W_{n} \Big( \frac{i'}{n} \Big) = W_{n} \Big( \frac{i'+1}{n} \Big) = \frac{Z_{i'}}{a_{n}} > \epsilon \quad \textrm{and} \quad W_{n} \Big( \frac{i'-1}{n} \Big) = \bigvee_{j=0}^{i'-1} \frac{Z_{j}}{a_{n}} \leq \frac{\epsilon}{4}.$$
Therefore after standard calculations we obtain
\begin{equation}\label{e:inc1}
  \Big| G_{n} \Big( \frac{i'}{n} \Big) - G_{n} \Big( \frac{i'-1}{n} \Big) \Big| = \Big| - \frac{Z_{i'}}{a_{n}} + 2 W_{n} \Big(\frac{i'-1}{a_{n}} \Big) \Big| > \frac{\epsilon}{2}
\end{equation}
and
\begin{equation}\label{e:inc2}
  \Big| G_{n} \Big( \frac{i'+1}{n} \Big) - G_{n} \Big( \frac{i'}{n} \Big) \Big| = \frac{Z_{i'}}{a_{n}} > \epsilon.
\end{equation}
On the set $A_{n,\epsilon} \setminus B_{n,\epsilon}$ it also holds that
$$ G_{n} \Big( \frac{i'}{n} \Big) \notin \Big[ G_{n} \Big( \frac{i'-1}{n} \Big), G_{n} \Big( \frac{i'+1}{n} \Big) \Big],$$
which implies that
\begin{eqnarray*}
  M \Big( G_{n} \Big( \frac{i'-1}{n} \Big), G_{n} \Big( \frac{i'}{n} \Big), G_{n} \Big( \frac{i'+1}{n} \Big) \Big) & &  \\[0.8em]
   & \hspace*{-20em} =& \hspace*{-10em} \min \bigg\{  \Big| G_{n} \Big( \frac{i'}{n} \Big) - G_{n} \Big( \frac{i'-1}{n} \Big) \Big|, \Big| G_{n} \Big( \frac{i'+1}{n} \Big) - G_{n} \Big( \frac{i'}{n} \Big) \Big| \bigg\}.
\end{eqnarray*}
Taking into account (\ref{e:inc1}) and (\ref{e:inc2}) we obtain
\begin{eqnarray*}
  \omega_{2/n}(G_{n}) & = & \sup_{{\footnotesize \begin{array}{c}
                                t_{1} \leq t \leq t_{2} \\
                                0 \leq t_{2}-t_{1} \leq 2/n
                              \end{array}}
} M(G_{n}(t_{1}), G_{n}(t), G_{n}(t_{2})) \\[0.8em]
   & \geq & M \Big( G_{n} \Big( \frac{i'-1}{n} \Big), G_{n} \Big( \frac{i'}{n} \Big), G_{n} \Big( \frac{i'+1}{n} \Big) \Big) > \frac{\epsilon}{2}
\end{eqnarray*}
on the event $A_{n,\epsilon} \setminus B_{n,\epsilon}$. Therefore, since $\omega_{\delta}$ is nondecreasing in $\delta$, it holds that
 \begin{eqnarray}\label{e:oscM1}
  \nonumber \liminf_{n \to \infty} \PP(A_{n,\epsilon} \setminus B_{n,\epsilon}) & \leq & \liminf_{n \to \infty}
 \PP (\omega_{2/n} (G_{n}) >  \epsilon /2)\\[0.4em]
 & \leq &   \lim_{\delta \to 0} \limsup_{n \to \infty}  \PP (\omega_{\delta} (G_{n}) >  \epsilon/2).
 \end{eqnarray}
Note that $x^{2\alpha}(1-e^{-x^{-\alpha}/2})$ tends to infinity as $x \to \infty$, and therefore we can find $\epsilon >0$ such that  $\epsilon^{2\alpha}(1-e^{-\epsilon^{-\alpha}/2}) > 4^{\alpha-1}$, i.e.
 $$ 1-e^{-\epsilon^{-\alpha}/2} > \frac{4^{\alpha-1}}{ \epsilon^{2\alpha}}.$$
 For this $\epsilon$, by relations (\ref{e:limAn}) and (\ref{e:limBn}), it holds that
 $$\lim_{n \to \infty} \PP(A_{n,\epsilon}) > \limsup_{n \to \infty} \PP(B_{n,\epsilon}),$$
 i.e.
 $$  \liminf_{n \to \infty} \PP(A_{n,\epsilon} \setminus B_{n,\epsilon}) \geq \lim_{n \to \infty}\PP(A_{n,\epsilon}) - \limsup_{n \to \infty} \PP(B_{n,\epsilon}) >0.$$
Thus by (\ref{e:oscM1}) we obtain
$$ \lim_{\delta \to 0} \limsup_{n \to \infty}  \PP (\omega_{\delta} (G_{n}) >  \epsilon/2) > 0$$
and (\ref{e:osc1}) holds, i.e. $G_{n}$ does not converge in distribution in $D([0,1], \mathbb{R})$ endowed with the (standard) $M_{1}$ topology.

If $L_{n}$ would converge in distribution to some $L = (V, W)$ in the standard $M_{1}$ topology on $D([0,1], \mathbb{R}^{2})$, then using the fact that linear combinations of the coordinates are continuous in the same topology (see Theorem 12.7.1 and Theorem 12.7.2 in Whitt~\cite{Whitt02}) and the continuous mapping theorem, we would obtain that $G_{n} = V_{n} - 2W_{n}$ converges to $V - 2W$ in $D([0,1], \mathbb{R})$ endowed with the standard $M_{1}$ topology, which is impossible, as is shown above.

\end{ex}

\section*{Acknowledgements}
This work has been supported in part by Croatian Science Foundation under the project 3526 and by University of Rijeka under the project numbers 13.14.1.2.02 and 17.15.2.2.01.

\end{document}